\documentclass[12pt,letterpaper,reqno]{amsart}

\usepackage{amsthm}
\usepackage{mathrsfs}
\usepackage{amsfonts,amssymb,amsmath}
\input amssym.def 
\input amssym.tex

\addtolength{\textwidth}{2cm} 
\addtolength{\hoffset}{-1cm} 
\addtolength{\marginparwidth}{-1cm}

\usepackage[dvips]{graphicx}
\usepackage{hyperref}

\newtheorem{lemma}{\bf Lemma}[section]
\newtheorem{prop}{\bf Proposition}[section]
\newtheorem{thm}{\bf Theorem}[section]
\newtheorem{corr}{\bf Corollary}[section]

\newcommand\gc{\gcd}
\newcommand\eps{\epsilon}

\newcommand\be{\begin{eqnarray*}}
\newcommand\ee{\end{eqnarray*}}
\newcommand\beq{\begin{equation}}
\newcommand\eeq{\end{equation}}

\newcommand\ben{\begin{eqnarray}}
\newcommand\een{\end{eqnarray}}
\newcommand\ord{\mathrm{ord}}

\newcommand{\cO}{\mathcal{O}}
\newcommand{\gp}{\mathfrak{p}}
\newcommand{\gB}{\mathfrak{B}}

\begin{document}

\title[Improved bounds for arithmetic progressions in product sets] 
{Improved bounds for arithmetic progressions in product sets}

\author{Dmitrii Zhelezov}
\thanks{Department of Mathematical Sciences, 
Chalmers University Of Technology \and
Department of Mathematical Sciences,  University of Gothenburg} 
\address{Department of Mathematical Sciences,
Chalmers University Of Technology, 41296 Gothenburg, Sweden} 
\address{Department of Mathematical Sciences, University of Gothenburg,
41296 Gothenburg, Sweden} \email{zhelezov@chalmers.se}

\subjclass[2000]{11B25 (primary).} \keywords{product sets, arithmetic progressions, polynomials}

\date{\today}

\begin{abstract}
  Let $B$ be a set of natural numbers of size $n$. We prove that the length of the longest arithmetic progression contained in the product set $B.B = \{bb'| \, b, b' \in B\}$ cannot be greater than $O(n \log n)$ which matches the lower bound provided in an earlier paper up to a multiplicative constant. For sets of complex numbers we improve the bound to $O_\eps(n^{1 + \eps})$ for arbitrary $\eps > 0$ assuming the GRH.
\end{abstract}

\maketitle
\section{Introduction}
  
 In \cite{ZH1} we have studied a relationship between the additive structure and the size of a product set in the following sense. It was shown that a product set $B.B = \{bb' |b,b'\in B\}$ of a set of natural numbers $B$ of size $n$ cannot contain arithmetic progressions longer than $O(\frac{n\log^2 n}{\log \log n})$. In the present note want to improve this result and show that a better bound actually holds.
\begin{thm} \label{thm:main}
  Suppose that $B$ is a set of $n$ natural numbers. Then the longest arithmetic progression in $B.B$ has length at most $O(n \log n)$.
\end{thm}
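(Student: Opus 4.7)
The plan is to bound $L$ by a double counting of the relation $\{(b,c)\in B\times B:bc\in P\}$ combined with a divisor--sum estimate that saves a logarithmic factor over the earlier bound of \cite{ZH1}. After dividing out $\gcd(a,d)$ I may assume $\gcd(a,d)=1$, so every $p\in P$ is coprime to $d$, forcing $\gcd(b,d)=\gcd(c,d)=1$ in any factorization $p=bc$ inside $B\cdot B$; hence I may restrict $B$ to its subset coprime to $d$.

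For each $b\in B$ the set $\alpha_b:=\{c\in\mathbb{Z}:bc\in P\}$ is an arithmetic progression of common difference $d$ and length at most $L/b+1$. Setting $N_b:=|\alpha_b\cap B|$, the inclusion $P\subseteq B\cdot B$ gives
\[
\sum_{b\in B}N_b\ge L,
\]
while trivially $N_b\le\min(n,L/b+1)$. Writing $B=\{b_1<\cdots<b_n\}$ with $b_k\ge k$, the direct estimate $L\le\sum_k\min(n,L/k+1)$ yields only $L=O(n^2)$; the analysis of \cite{ZH1}, which assigns to each $p\in P$ its largest prime divisor and uses $\omega(b)=O(\log b/\log\log b)$, improves this to $O(n\log^2n/\log\log n)$.

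To reach the sharp $O(n\log n)$ bound I would dispense with the $\omega$-estimate and exploit instead the tighter divisor sum $\sum_{b\in B}1/b\le H_n=O(\log n)$, which is already attained for $B=\{1,\dots,n\}$. The key point is that all the progressions $\alpha_b$ share the same common difference $d$ and their cofactor ranges couple through the residue classes of $B$ modulo $d$, so that the $N_b$'s can be estimated jointly rather than individually. I expect the argument to split by the size of $d$: for $d$ large, one applies Cauchy--Schwarz over the residue classes of $(\mathbb{Z}/d)^{\times}$ to bound $|\{(b,c)\in B^2:bc\in P\}|$ by $\sum_r|B_r|^2=O(n^2/d)=O(n\log n)$ once $d\gtrsim n/\log n$; for $d$ small, one exploits that $P$ sits in a short interval together with the fact (essentially forced by $1\in B$, after the harmless augmentation $B\mapsto B\cup\{1\}$) that every prime in $P$ must belong to $B$, whence Brun--Titchmarsh furnishes the $O(n\log n)$ bound from the other side.

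The principal obstacle is to verify that the two regimes meet cleanly at the threshold $d\sim L/n$ so as to replace the extraneous factor $\log n/\log\log n$ of the previous proof by the sharper $H_n$. This is exactly the logarithm one ``pays'' when converting $\sum_b\min(n,L/b+1)$ into an honest upper bound for $L$, and avoiding a second copy of it requires showing that the only configurations of $B$ which maximize the tail sum $\sum_{b\in B,\,b>L/n}1/b$ are exactly those for which either the modular or the smoothness argument already applies.
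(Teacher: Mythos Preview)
Your proposal is a strategy outline, not a proof, and the two concrete mechanisms you propose both fail. In the large-$d$ regime, from $bc\equiv a\pmod d$ you can indeed bound the number of pairs by $\sum_r |B_r|\,|B_{ar^{-1}}|\le \sum_r |B_r|^2$, but the assertion $\sum_r |B_r|^2 = O(n^2/d)$ is unjustified and false in general: Cauchy--Schwarz gives the \emph{lower} bound $\sum_r|B_r|^2\ge n^2/\varphi(d)$, and if $B$ concentrates in one residue class the sum is $n^2$. In the small-$d$ regime, the observation that every prime in $P$ lies in $B$ yields $\#\{\text{primes in }P\}\le n$, but to extract an upper bound on $L$ you would need a \emph{lower} bound on the number of primes in $P$; Brun--Titchmarsh is an upper bound and points the wrong way, and without controlling the starting term $a$ there is no lower bound available (an AP with $\gcd(a,d)=1$ can be prime-free). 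Finally, you explicitly flag the matching of the two regimes at $d\sim L/n$ as the ``principal obstacle'' and offer no argument for it.

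The paper's proof is completely different and uses neither divisor sums nor residue-class counting. Writing $A=\{D(r+d[N])\}$ with $\gcd(Dr,d)=1$, an elementary inequality of Erd\H{o}s (Lemma~\ref{lem:ErdosIneq}) forces a dichotomy: either $r,d<N^2$, in which case the previously known Lemma~\ref{lem:main} already gives $N\le 72\,n\log n$; or $\prod_i(r+id)$ has more than $N/2$ distinct prime factors. In the second case one passes to a subset $A''$ of size $\gg N$ equipped with primes $p_i>N/2$ each dividing exactly one element of $A''/D$, builds the bipartite \emph{containment graph} on $B\sqcup B$ with one edge per element of $A''$, and shows this graph is acyclic: a cycle would make the $p$-adic valuation of the alternating product of its edge-labels simultaneously equal to and strictly greater than $\ord_p(D)$. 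Hence $|A''|<2|B|$, giving $N=O(n)$ outright in this case.
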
 

In the same paper \cite{ZH1} it was shown that there is a set $B$ of $n$ natural numbers such that $B.B$ contains an arithmetic progression of length $\Omega(n \log n)$, so the bound of Theorem \ref{thm:main} is tight up to a multiplicative constant. 

In the second part of the paper we generalize Theorem \ref{thm:main} to get a slightly weaker bound 
for complex numbers. Unfortunately, we need certain sharp estimates (namely, an effective version of the Chebotarev density theorem) so the result is conditional on the Generalized Riemann Hypothesis (GRH).

\begin{thm} \label{thm:complex}
  Suppose that $B$ is a set of $n$ complex numbers. Then the longest arithmetic progression in $B.B$ has length at most $O_\eps(n^{1+\eps})$ for any $\eps > 0$, assuming GRH.
\end{thm}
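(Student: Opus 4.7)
The plan is to lift the integer argument underlying Theorem~\ref{thm:main} to the ring of integers $\cO_K$ of a suitable number field, after reducing the general complex case to that setting by specialization, and to replace the elementary prime-counting step with effective Chebotarev under GRH, which is the source of the $n^\eps$ loss.

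Given an AP $A = \{a_0 + \ell d : 0 \le \ell < k\} \subseteq B \cdot B$ with $B \subset \mathbb{C}$, the field $K_0 = \mathbb{Q}(B, a_0, d)$ is finitely generated over $\mathbb{Q}$, and specialising a transcendence basis of $K_0$ to sufficiently generic algebraic values preserves the finitely many relations $a_\ell = b_{i_\ell} b_{j_\ell}$ and places the whole configuration inside a number field $K$; after clearing denominators one may assume $B \cup \{a_0, d\} \subset \cO_K$. The argument of Theorem~\ref{thm:main} then runs in $\cO_K$ with prime ideals $\gp$ in the role of rational primes: if $\gp \nmid d$ then the number of $a \in A$ with $\gp \mid a$ is at most $\lceil k/N\gp \rceil$, while $\gp \mid bb'$ forces $\gp \mid b$ or $\gp \mid b'$, bounding the same quantity by $2n \cdot |\{b \in B : \gp \mid b\}|$. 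Summing this inequality over degree-one prime ideals of norm at most $X$ and invoking the Mertens-type identity $\sum_{\gp,\,N\gp \le X} 1/N\gp \sim \log\log X$ reproduces the integer calculation, provided the expected density of degree-one primes actually holds throughout $[1,X]$.

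The new input, and the reason for the GRH assumption, is the Lagarias--Odlyzko effective Chebotarev density theorem: under GRH, the count of degree-one prime ideals of $\cO_K$ of norm at most $X$ matches the prime ideal theorem once $X$ exceeds a fixed polynomial in $\log|\mathrm{disc}(K)|$ and $[K:\mathbb{Q}]$. The main obstacle is that both invariants of $K$ may grow super-polynomially with $n$, so to make Chebotarev valid in the range required for the Mertens sum one must take $X$ slightly larger than the optimal $n^{O(1)}$; the cost of this enlargement is precisely the replacement of the logarithmic factor in Theorem~\ref{thm:main} by $n^\eps$. Balancing the Chebotarev error against $\log|\mathrm{disc}(K)|$ and $[K:\mathbb{Q}]$, and formalising the specialization so as to keep these invariants under control, is the delicate part of the proof; the final choice $X = n^{1+\eps/2}$ should then yield the claimed bound $k = O_\eps(n^{1+\eps})$.
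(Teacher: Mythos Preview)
Your plan has a genuine gap at exactly the point you flag as ``the delicate part'': controlling $[K:\mathbb{Q}]$ and $\log|\mathrm{disc}(K)|$. If $K$ is obtained by specializing a transcendence basis of $\mathbb{Q}(B,a_0,d)$, then $K$ is generated by (the images of) all $n$ elements of $B$, and nothing in your construction prevents $[K:\mathbb{Q}]$ from growing, say, exponentially in $n$. In that regime the Lagarias--Odlyzko error term $\sqrt{x}\log(d_L x^{|G|})$ swamps the main term for every $x$ of size $n^{O(1)}$, so your Mertens-type sum over degree-one primes of norm at most $X=n^{1+\eps/2}$ simply has no primes to sum over. No ``balancing'' fixes this: you need an \emph{a priori} bound on $[K:\mathbb{Q}]$ that is independent of $n$, and specialization alone cannot provide one.

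The paper supplies precisely this missing idea, and it is combinatorial rather than arithmetic. One builds the containment graph $G(A,B.B)$ and applies a dichotomy: either $G$ has no $2k$-cycle, and then the Bondy--Simonovits extremal bound $E(G)=O_k(|B|^{1+1/k})$ gives the result directly; or $G$ does contain a $2k$-cycle, and the corresponding multiplicative identity forces the single generator $r$ of the AP to satisfy an integer polynomial of degree at most $k$ and height $O(N^k)$. In the second case one works in $K=\mathbb{Q}(r)$, which has degree $\le k$ and discriminant $N^{O_k(1)}$, reduces $B$ to $K$ via Lemma~\ref{lemma:kreduction}, and takes norms down to $\mathbb{Z}$; only then is effective Chebotarev invoked, for a polynomial of fixed degree $k$, which is exactly the regime where GRH gives a useful count of primes up to $N$. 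The $n^\eps$ loss comes from choosing $k\approx 1/\eps$ in the Bondy--Simonovits step, not from enlarging $X$.
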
 

So the problem to unconditionally improve the bound $O(n^{3/2})$ obtained in \cite{ZH1} for sets of complex numbers remains open. It is interesting to note that the polynomial congruences arising in the proof of Theorem \ref{thm:complex} were studied by Bourgain, Garaev, Konyagin and Shparlinski \cite{BGKS} in a slightly different context. 


\medskip
\noindent {\bf Remark.} It is possible to track down the explicit dependency on $\eps$, so in fact Theorem \ref{thm:complex} holds with the absolute bound
$$
	|AP| = O\left(n^{1+\frac{\log \log n}{\log^{1/2} n}}\right).
$$

\section{Notation}
The following notation will be used in this paper: 
\begin{enumerate}
\item $f(n) = O(g(n))$ means that $\limsup_{n \rightarrow \infty} \frac{f(n)}{g(n)} < \infty$.
\item $f(n) = \Omega(g(n))$ means that $g(n) = O(f(n))$.
\item $X \ll Y$, resp. $X \gg Y$ means $X = O(Y)$, resp. $Y = O(X)$. If not stated explicitly, when using such notation, we assume that $X$ and $Y$ depend on a large parameter $n$, which is normally the size of the set in question. For example, for two sets $A$ and $B$, $|A| \gg |B|$ means that $|B|/|A|$ is bounded from above by a constant independent of the sizes of $A$ and $B$. 
\item When dealing with arithmetic progressions, we will often write $\{a + d[N] \}$ as a shortcut for the set
$$
	\{a + di\,;\,\, i = 0, \ldots, N-1 \}.
$$
Note, that $N$ is then the size of the set. Likewise, $\{c(a + d[N])\}$ is just $\{ca + cd[N]\}$ and for an arbitrary function $f(\cdot)$,
$$
	\{ f([N]) \} = \{f(i)\,;\,\, i = 0, \ldots, N-1 \}.
$$

\item Let $H$ be a fixed graph. Then $\mathrm{ex}(n, H)$ denotes 
the maximal number of edges among all graphs with $n$ vertices which does not contain $H$ as a subgraph. In particular, $\mathrm{ex}(n, C_{k})$ denotes the maximal number of edges a graph with $n$ vertices which avoids cycles of length $k$.
\item Let $p$ be a prime, then $d = \ord_p(n)$ denotes the maximal power of $p$ such that $p^d \mid n$.
\item For a natural number $n$, $\omega(n)$ denotes the number of distinct prime factors of $n$. $\omega(n, k)$ counts only the prime factors $p \geq k$.
\end{enumerate}

\section{Integer arithmetic progressions} 

	Let $A$ be a set of natural numbers of size $N$. One of the obstructions which prevents $A$ from being a subset of a product set of considerably smaller size (say of order $\sqrt{N}$) is a large number of distinct prime factors of its elements. In particular, this is the case if $A$ is an arithmetic progression and if the size of the elements of $A$ is bounded in terms of $N$. If $A$ is an arithmetic progression, it is convenient to assume that $A$ is of the form 
\beq \label{eq:factor}
A = \{ D(r + d[N]) \},
\eeq
with $\gc(Dr, d) = 1$, which one can do without loss of generality (by Lemma 1 of \cite{ZH1}). The exact claim is then as follows (see Lemma 4 of \cite{ZH1}) . 
	
\begin{lemma} \label{lem:main}
  Let $\gc(Dr, d) = 1$ and $A = \{D(r + d[N])\}$ be an arithmetic progression contained in a product set $B.B$ with $|B| = n$. If $d < N^{k}$, $r < N^{k}$, $k = o(\log N)$ then $N \leq 36kn \log n$ for sufficiently large $n$.
\end{lemma}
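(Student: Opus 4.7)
The plan is to translate the hypothesis $A \subseteq B \cdot B$ into a graph-theoretic statement and then rule out dense configurations via a cycle-polynomial argument. For each $j \in \{0, 1, \ldots, N-1\}$ I fix a factorization $D(r + dj) = b_j b_j'$ with $b_j, b_j' \in B$, and let $G$ be the simple graph on vertex set $B$ whose edges are the unordered pairs $\{b_j, b_j'\}$. Since distinct indices $j$ produce distinct products, the edges of $G$ are distinct, and $G$ has $n$ vertices and $N$ edges. The goal is then to show that $G$ cannot be this dense by extracting a short even cycle from which a polynomial identity contradicts the hypothesis $r < N^{k}$.

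By the Bondy--Simonovits theorem, $\mathrm{ex}(n, C_{2K}) = O(K n^{1 + 1/K})$, and for $K \sim \log n$ (so that $n^{1/K} = O(1)$) this reads $\mathrm{ex}(n, C_{2K}) = O(n \log n)$. Hence if $N > 36 k n \log n$, then $G$ contains a copy of $C_{2K}$; let its vertices be $v_1, \ldots, v_{2K}$ and its edges carry indices $m_1, \ldots, m_{2K}$ with $v_i v_{i+1} = D(r + d m_i)$. Multiplying the products corresponding to the odd-indexed and the even-indexed edges around the cycle gives $\prod_i D(r + d m_{2i-1}) = \prod_i D(r + d m_{2i}) = \prod_i v_i$; cancelling $D^K$ and writing $\mu_i = m_{2i-1}$, $\lambda_i = m_{2i}$, the integer polynomial
\[
P(x) = \prod_{i=1}^{K}(x + d \mu_i) - \prod_{i=1}^{K}(x + d \lambda_i)
\]
satisfies $P(r) = 0$, has degree at most $K - 1$ (the $x^K$ terms cancel), and is non-zero (the two multisets $\{\mu_i\}$, $\{\lambda_i\}$ are edge-disjoint in a simple cycle, hence distinct).

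Expanding $P$ in elementary symmetric polynomials, $P(x) = \sum_{j=0}^{K-1} c_j x^j$ with $c_j = d^{K-j}\bigl(e_{K-j}(\mu) - e_{K-j}(\lambda)\bigr)$. Let $j^{*}$ be the smallest index with $c_{j^{*}} \neq 0$; since $r \neq 0$, the factorization $P(x) = x^{j^{*}} \tilde P(x)$ yields $\tilde P(r) = 0$, and the rational root theorem combined with $\gcd(r, d) = 1$ gives the divisibility
\[
r \mid e_{K - j^{*}}(\mu_1, \ldots, \mu_K) - e_{K - j^{*}}(\lambda_1, \ldots, \lambda_K),
\]
hence $r \leq (2KN)^{K - j^{*}}$. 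To derive the advertised bound, I would first replace $A$ by a sub-AP of length $N/2$ starting near $r + d \lfloor N/2 \rfloor \gtrsim dN$ (at the cost of replacing $k$ by $k + O(1)$), so that the effective starting term is large relative to $d$. Then tuning $K \sim \log n$ and comparing the Bondy--Simonovits threshold against the resulting polynomial bound on $r$ should produce the required inequality $N \leq 36 k n \log n$.

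The main obstacle I expect is the relative weakness of the rational-root bound on $r$: with $K \sim \log n$ it gives $r \ll (N \log n)^{\log n}$, which is far larger than the hypothesized $r < N^{k}$ for $k = o(\log N)$, so a single cycle provides insufficient information. Overcoming this requires either combining divisibility constraints from many cycles simultaneously (so that $r$ divides the greatest common divisor of several independently nonzero integers), or a sharper lower bound on $|e_{K - j^{*}}(\mu) - e_{K - j^{*}}(\lambda)|$ exploiting the specific combinatorial structure of the two matchings of a cycle. Calibrating $K$ against $k$ through this refinement is what ultimately produces the factor $k n \log n$ on the right-hand side.
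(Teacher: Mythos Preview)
The paper does not give its own proof of this lemma: it is quoted verbatim as Lemma~4 of \cite{ZH1} and used as a black box. So there is no proof in the present paper to compare against, and your proposal should be judged on its own merits and against what one can reconstruct of the argument in \cite{ZH1}.

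Your setup (the containment graph, the even-cycle search via Bondy--Simonovits, and the cycle identity $\prod (r+d\mu_i)=\prod (r+d\lambda_i)$) is exactly the machinery the paper deploys elsewhere (Lemma~4.1 and the proof of Theorem~1.2 use the same cycle-to-polynomial translation), so the instinct is sound. The genuine gap is the direction of the inequality you extract. From $\tilde P(r)=0$ and the rational root theorem you obtain
\[
r \mid e_{K-j^*}(\mu)-e_{K-j^*}(\lambda),
\qquad\text{hence}\qquad
r \le \bigl|e_{K-j^*}(\mu)-e_{K-j^*}(\lambda)\bigr|\ll (KN)^{K-j^*}.
\]
This is an \emph{upper} bound on $r$, and the hypothesis $r<N^{k}$ is also an upper bound; two upper bounds never contradict one another. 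Your shift $r\mapsto r'=r+d\lfloor N/2\rfloor$ does manufacture a lower bound $r'\gtrsim dN$, but since we only know $d\ge 1$, this gives merely $r'\gtrsim N$, while on the other side $K-j^*\ge 2$ is forced (the case $K-j^*=1$ would make $\tilde P$ a nonzero constant with a root), so the divisor is at least of order $N^{2}$ and no contradiction ensues. Your two proposed repairs point the wrong way: taking a $\gcd$ over many cycles can only \emph{sharpen} the upper bound on $r$, and a sharper \emph{lower} bound on $|e_{K-j^*}(\mu)-e_{K-j^*}(\lambda)|$ would \emph{weaken} the conclusion $r\le |e_{K-j^*}(\mu)-e_{K-j^*}(\lambda)|$. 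In short, the cycle--polynomial route, as you have set it up, cannot close the argument for this lemma.

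The bound $N\le 36\,k\,n\log n$ in \cite{ZH1} is obtained by a different mechanism: it exploits the smallness of the elements $r+dj<2N^{k+1}$ directly (each has at most $O(k\log N)$ prime factors), and feeds this into a prime-based argument controlling how many edges of the containment graph a single prime can ``see''. The factor $k$ in the conclusion comes from that prime-factor count, not from a cycle length. If you want to reconstruct the proof, that is the thread to follow; the Bondy--Simonovits idea is better suited to the complex case, where the paper uses it to bound the \emph{degree} of $r$ rather than its size.
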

	
We are going to prove that either the hypothesis of Lemma \ref{lem:main} is fulfilled or there are many distinct prime divisors of the elements of $A$.

The following lemma is a slight modification of an inequality due to Erd\H{o}s as cited in \cite{M}. The proof is elementary yet illuminating. 

\begin{lemma} \label{lem:ErdosIneq}
	
	Let $\gc(d,r) = 1$ and $\Pi = r(r+d)\ldots(r+(N-1)d)$. For every prime factor $p$ of $\Pi$, leave out a term $r + id$ with the maximal $\ord_p$, that is, for which $p^m | r + id$ and $p^{m+1}$ divides none of the elements (resolving ties arbitrarily and obviously a term is removed at most once). Let $M$ be the total number of elements that are left out. Then
	
	\beq \label{eq:erdosineq}
		r(r+d)\ldots(r + (N-1-M)d) \leq (N-1)!.
	\eeq
\end{lemma}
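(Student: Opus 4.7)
The plan is to prove the stronger statement that the product of the $N-M$ \emph{remaining} terms (those not left out) already divides $(N-1)!$, and then deduce \eqref{eq:erdosineq} by observing that among all $(N-M)$-element subsets of the AP the one consisting of the first $N-M$ terms attains the smallest product.

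Fix a prime $p$ and set $a_i := r+id$. If $p \mid d$ then $\gcd(r,d) = 1$ forces $p \nmid a_i$ for every $i$, and $\ord_p(\Pi) = 0$; so assume $p \nmid d$. Put
\[
N_k := |\{i \in \{0, \ldots, N-1\} : p^k \mid a_i\}|, \qquad K := \max_i \ord_p(a_i),
\]
so that $\ord_p(\Pi) = \sum_{k \geq 1} N_k$. Since $d$ is invertible modulo $p^k$, the residues $\{a_i \bmod p^k\}_{i=0}^{N-1}$ form an AP of length $N$ with common difference coprime to $p^k$, giving $N_k \leq \lceil N/p^k \rceil$ and therefore $N_k - 1 \leq \lfloor (N-1)/p^k \rfloor$. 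Let $S \subseteq \{0, \ldots, N-1\}$ be the set of all $M$ removed indices and let $j_p \in S$ be the index removed for $p$, with $\ord_p(a_{j_p}) = K$. Then, using Legendre's formula,
\[
\ord_p\!\left(\prod_{i \notin S} a_i\right) = \ord_p(\Pi) - \sum_{i \in S} \ord_p(a_i) \leq \ord_p(\Pi) - K = \sum_{k=1}^{K}(N_k - 1) \leq \sum_{k \geq 1} \lfloor (N-1)/p^k \rfloor = \ord_p((N-1)!).
\]
Since $p$ was arbitrary, $\prod_{i \notin S} a_i$ divides $(N-1)!$ and is in particular bounded by it.

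Because the $a_i$ are positive and strictly increasing, the product of the first $N-M$ terms is the smallest among all $(N-M)$-element subproducts, so
\[
r(r+d)\cdots(r + (N-1-M)d) \leq \prod_{i \notin S} a_i \leq (N-1)!,
\]
which is \eqref{eq:erdosineq}. The only mildly delicate point---what happens when a single index attains the maximum $\ord_p$ for several primes simultaneously---is handled for free: such an index is counted only once in $M$, but the inequality $\ord_p(\prod_{i \notin S} a_i) \leq \ord_p(\Pi) - K$ does not require equality, so extra coincident deletions simply shrink the remaining product further. In spirit this is the classical Erd\H{o}s substitution trick: trading one maximum prime-power factor per prime converts the AP-product into a divisor of a factorial.
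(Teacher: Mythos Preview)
Your proof is correct and follows essentially the same approach as the paper's: both show that the product of the $N-M$ remaining terms divides $(N-1)!$ and then bound the left-hand side of \eqref{eq:erdosineq} by that product. The paper compresses your prime-power counting into the single assertion that after removal the order of each prime $p$ in the remaining product is at most $\sum_{i\geq 1}\lfloor (N-1)/p^i\rfloor$; your computation with $N_k$ and $K$ is exactly the justification that the paper leaves implicit, and your uniform treatment avoids the paper's separate (tacit) handling of primes $p\geq N$.
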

\begin{proof}
  After the elimination process described in the statement of the lemma is finished, the remaining terms have only prime factors less than $N$ coprime with $d$. The order of such a prime $p$ is at most 
  $$
  	\sum^\infty_{i=1} \left[ \frac{N-1}{p^i}\right],
  $$
	that is, the power of $p$ in $(N-1)!$. Therefore, the product of the remaining terms divides $(N-1)!$. The claim of the lemma then follows as the left hand side is certainly less than or equal to the product of the remaining terms. 
	
\end{proof}

\begin{corr} \label{corr:ineq}
	Either both $d, r < N^2$ or $M > N/2$ assuming $N$ is sufficiently large.
\end{corr}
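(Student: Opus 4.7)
The plan is to argue by contrapositive: I will assume $M \leq N/2$ and deduce that both $r$ and $d$ must be smaller than $N^2$. The tool is the inequality from Lemma~\ref{lem:ErdosIneq},
$$
r(r+d)(r+2d)\cdots(r+(N-1-M)d) \leq (N-1)!,
$$
which I will attack by producing a lower bound on the left-hand side, under the hypothesis $r \geq N^2$ or $d \geq N^2$, large enough to contradict this upper bound.

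First, under the hypothesis $r \geq N^2$, every factor $r + id$ is itself at least $r \geq N^2$, and since $M \leq N/2$ leaves $N - M \geq N/2$ factors, the product is at least $r^{N-M} \geq N^{2(N-M)} \geq N^{N}$. Weighed against the crude bound $(N-1)! \leq (N-1)^{N-1} < N^{N-1}$, this already contradicts Lemma~\ref{lem:ErdosIneq}.

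Next, if $d \geq N^2$, I estimate the product by keeping only the $d$-part of each factor with $i \geq 1$ (using $r+id \geq id$) and the trivial bound $r \geq 1$ for the $i=0$ term, giving $\prod_{i=0}^{N-1-M}(r+id) \geq d^{N-1-M}(N-1-M)!$. Coupled with $(N-1)!/(N-1-M)! = \prod_{k=N-M}^{N-1} k \leq N^M$, Lemma~\ref{lem:ErdosIneq} forces $d^{N-1-M} \leq N^M$; substituting $d \geq N^2$ and $M \leq N/2$ yields $2(N-1-M) \leq M$, hence $M \geq (2N-2)/3$, contradicting $M \leq N/2$ as soon as $N \geq 5$.

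The only bookkeeping point to get right is that the number of surviving factors is $N - M$ (indices $0$ through $N-1-M$) and the simple factorial-ratio estimate $(N-1)!/(N-1-M)! \leq N^M$; beyond that there is no real obstacle, since the assumption $r \geq N^2$ or $d \geq N^2$ provides a much more dramatic lower bound than $(N-1)!$ can accommodate once half the factors remain.
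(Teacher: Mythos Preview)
Your proof is correct and follows exactly the approach the paper indicates: the paper's proof of Corollary~\ref{corr:ineq} consists of the single line ``Follows from (\ref{eq:erdosineq}),'' and you have simply supplied the routine details of extracting the contrapositive from that inequality. Your case split and the elementary estimates $(N-1)! < N^{N-1}$ and $(N-1)!/(N-1-M)! \leq N^{M}$ are precisely what is needed.
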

\begin{proof}
	Follows from (\ref{eq:erdosineq}).
\end{proof}

Before we proceed with the proof, let us define an auxiliary bipartite graph $G(A, B.B)$ which in general can be constructed for any sets $A$ and $B$ whenever $A \subset B.B$. The color classes of $G$ are just two copies of $B$ and for each $a \in A$ we pick a unique representation $a = b_1b_2$ (for example, take the lexicographically smallest pair) and place an edge $(b_1, b_2)$ in $G$. Record that $V(G) = 2|B|$ and $E(G) = |A|$. We will call this graph the \textit{containment} graph of $A$ in $B.B$.

\begin{proof}\textbf{[of Theorem \ref{thm:main}]}

		By Corollary \ref{corr:ineq} and Lemma \ref{lem:main}, it suffices to handle the case $M > N/2$. Let $p_1, ..., p_M$ be the primes and $a_1, ..., a_M$ the corresponding elements of $A/D = \{r + d[N]\}$, left out by the process described in Lemma \ref{lem:ErdosIneq}, applied to the product
$$
	\Pi = \prod_{a \in A/D}a = \prod_{j=0}^{N-1} (r + jd).
$$		 
		 
		 First, we prune from the set $A' = \{a_1, \ldots, a_M\}$ all elements $a_i$ with $p_i < N/2$. By the Prime Number Theorem, $o(N)$ elements are pruned, so with a slight abuse of notation we may assume that $M > N/3$ and $p_i > N/2$. Then, each $p_i$ divides at most two elements, so we can pick a subset of primes $P = \{ p_i \}$ and a subset $A'' \subset A'$ with $|A''| > N/6$, such that each $p \in P$ divides exactly one element in $A''$. Note, that $A''$ is as well a subset of $A/D$, and what we are going to show is that $DA'' \subset B.B$ implies $|B| = \Omega(N)$. Clearly, this implies the claim of the theorem. 
		
		Let $G(DA'', B.B)$ be the containment graph of $DA''$ in $B.B$.	We claim that $G$ contains no cycles. Suppose not and $C = \langle b_1....b_{2m} \rangle$ is a cycle in $G$ ($C$ is even since $G$ is bipartite). By construction, there is a prime $p$ which divides $b_1b_2/D$ and does not divide $b_ib_{i+1}/D$ for $i > 1$ (indices are taken modulo $m$). In other words, $\ord_p(b_1b_2) > \ord_p(D)$ and $\ord_p(b_ib_{i+1}) = \ord_p(D)$ for $i > 1$. On the other hand one can write
		$$
			\prod_{\text{ $i$ is even}} b_{i}b_{i+1} = \prod_{\text{ $i$ is odd}}  b_{i}b_{i+1},
		$$ 
	which leads to a contradiction, as the order of $p$ on the left hand side is greater than on the right hand side. Thus, $|A''| = E(G) < V(G) = 2|B|$ and we are done.
		
\end{proof}

Following the same route, we can state the following general proposition.

\begin{prop} \label{prop:general}
	Let $A$ be a set of integers of size $N$ such that for every $a \in A$ holds the bound $a/\gc(A) < N^{K_1}$, where $\gc(A)$ denotes the GCD of all the elements in $A$. Suppose that $A \in B.B$ for some set $B$ and 
	$$
	\omega\left(\prod_{a \in A}\frac {a}{\gc(A)}, K_2N\right) \geq M.
	$$ 
	Then $|B| = \Omega_{K_1, K_2}(M)$. 
\end{prop}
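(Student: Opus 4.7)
The plan is to mimic the proof of Theorem~\ref{thm:main} with the ``large prime'' threshold $N/2$ replaced by $K_2 N$ and the size bound $N^2$ replaced by $N^{K_1}$. Set $D = \gc(A)$ and $A' = A/D$, so $|A'| = N$, $\gc(A') = 1$, and every $a' \in A'$ is strictly less than $N^{K_1}$. Let $S$ be the set of primes $p \geq K_2 N$ dividing $\prod_{a' \in A'} a'$, so $|S| \geq M$. Since the product of the primes of $S$ dividing any fixed $a' \in A'$ is at most $a' < N^{K_1}$, every $a'$ has at most $C = C(K_1, K_2)$ prime divisors from $S$ (indeed $C \leq K_1 \log N / \log (K_2 N)$, which is bounded by a constant when $N$ is large).

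For each $p \in S$ pick $\alpha(p) \in A'$ with $p \mid \alpha(p)$ (say with maximal $\ord_p$, in the spirit of Lemma~\ref{lem:ErdosIneq}), set $A_0 := \alpha(S)$, and let $H$ be the bipartite divisibility graph between $S$ and $A_0$. The crucial step is to extract from $H$ an \emph{induced} matching of size $\Omega_{K_1, K_2}(M)$, that is, equal-size subsets $P \subset S$ and $A'' \subset A_0$ together with a bijection $P \to A''$ by divisibility such that no $p \in P$ divides any element of $A''$ other than its partner. In Theorem~\ref{thm:main} this was immediate from the AP-specific fact ``each prime $p > N/2$ divides at most two terms of the progression,'' but in the general case a single $p \in S$ may divide many elements of $A'$, and this is the main obstacle. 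To overcome it I would note that every vertex on the $A_0$-side of $H$ has degree at most $C$, so $|E(H)| \leq C M$ and the average degree of a prime in $S$ is at most $C$; by Markov's inequality at least $M/2$ primes (a set $P_1$) have degree at most $2C$ in $H$. The restricted graph $H[P_1 \cup A_0]$ then has maximum degree $\leq 2C$ and at least $M/2$ edges, so a greedy procedure (pick an edge, delete all edges incident to the at most $4C$ neighbours of its two endpoints) yields an induced matching of size at least $M/(16 C^2) = \Omega_{K_1, K_2}(M)$.

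With $A''$ and $P$ in hand, the containment graph $G(D A'', B.B)$ must be acyclic by the very argument at the end of the proof of Theorem~\ref{thm:main}: given a cycle, its consecutive edges correspond to elements $Da_1, \ldots, Da_{2m}$ with $a_i \in A''$, and the identity $\prod_{i\text{ odd}} b_i b_{i+1} = \prod_{i\text{ even}} b_i b_{i+1}$ forces an equality of $\ord_p$ on both sides for every prime $p$; but for $p \in P$ the prime matched to $a_1$, the induced-matching property gives $\ord_p(D a_1) > \ord_p(D)$ while $\ord_p(D a_j) = \ord_p(D)$ for $j \geq 2$, a contradiction. Hence $|A''| = E(G) < V(G) = 2 |B|$, and the claim $|B| = \Omega_{K_1, K_2}(M)$ follows. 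The main difficulty is the induced-matching extraction --- the Markov-plus-greedy reduction is what absorbs all dependence on $K_1$ and $K_2$ into the implied constant.
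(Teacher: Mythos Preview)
Your proof is correct and follows the same route as the paper: extract a subset $A''$ in which every element carries a ``private'' large prime, then show the containment graph $G(DA'',B.B)$ is acyclic exactly as in Theorem~\ref{thm:main}. The paper asserts the extraction step in a single line (``Thus, we can choose $\Omega(M)$ primes $P=\{p_i\}$ and $A'=\{a_i\}$ \ldots''), whereas you supply the Markov-plus-greedy induced-matching argument that actually justifies it once the AP-specific fact ``each $p>N/2$ divides at most two terms'' is no longer available; apart from this extra care (and the harmless slip $|E(H)|\le CM$, which should read $|E(H)|\le C|A_0|\le C|S|$ --- the needed bound on the average $S$-degree still follows), your argument and the paper's coincide.
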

\begin{proof}
	Because of the bound $a/\gc(A) < N^{K_1}$ each element of $\{a/\gc(A): \,\, a \in A \}$ can have at most $O(1)$ distinct prime factors which are greater than $K_2N$. Thus, we can choose $\Omega(M)$ primes $P = \{p_i\}$ and $A' = \{ a_i \}$ such that $p_i > K_2N$ and each $p_i$ divides exactly one element of $A'/\gc(A)$. Applying verbatim the argument of the proof of Theorem \ref{thm:main} we then conclude that $|B| = \Omega(|A'|) = \Omega_{K_1, K_2}(M)$
\end{proof}
  
\section{The case of complex numbers}  
 
 We now switch to the complex case. In \cite{ZH1} it was shown that if $B$ is a set of complex numbers and $B.B$ contains an AP $A$ with $|A| = \Omega\left(|B|^{3/2}\right)$, one can construct a set $B'$ of integers such that $B'.B'$ contains an AP of the same length. Though we don't see how this argument extends to shorter sequences, it is still possible to deduce some structural information from the fact that the product set contains a long AP. 

As we are dealing with complex numbers now, it is safe to assume that $A = \{r + [N] \}$, rescaling each element of $B$ by a square root of the difference if necessary. Let $G(A, B.B)$ be the containment graph of $A$ in $B.B$. Then, if we consider $r$ as a variable, any cycle in $G$ implies a polynomial relation in $r$ with integer coefficients, so in fact $r$ must be algebraic, and the degree is controlled by the cycle length. 

Before we proceed, recall that the \emph{height} of an integer polynomial $P$ is defined as the largest (by absolute value) coefficient of $P$. It will also convenient for our purposes to define the height of an algebraic number $\alpha$ as the height of its primitive polynomial, that is, the integer polynomial of minimal degree and minimal in absolute value leading coefficient, of which $\alpha$ is a root. 

\begin{lemma} \label{lemma:cycles}
	If $G(A, B.B)$ contains a $2k$ cycle then $r$ is an algebraic number of degree at most $k$ and height at most $O(N^k)$.
\end{lemma}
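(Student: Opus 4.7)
The plan is to turn a $2k$-cycle in $G(A, B.B)$ into a polynomial identity satisfied by $r$, and then read off the degree and height of $r$ from that identity.

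First I would label the cycle. A $2k$-cycle in the bipartite graph $G$ has the form $b_1, b_2, \ldots, b_{2k}$ alternating between the two copies of $B$, and its $2k$ edges correspond, by construction of the containment graph, to $2k$ pairwise distinct elements of $A$. Writing each such element as $r + j_\ell$ with $j_\ell \in \{0, 1, \ldots, N-1\}$ all distinct, and exploiting the fact that every vertex of the cycle is incident to exactly one odd-indexed and one even-indexed edge, I obtain the key identity
$$
\prod_{\ell=1}^{k}(r + j_{2\ell-1}) \;=\; \prod_{\ell=1}^{k}(r + j_{2\ell}),
$$
which says $P(r) = 0$ for the integer polynomial
$$
P(x) \;=\; \prod_{\ell=1}^{k}(x + j_{2\ell-1}) - \prod_{\ell=1}^{k}(x + j_{2\ell}).
$$

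Next I would verify the two properties of $P$ that deliver the conclusion. The polynomial $P$ is nonzero because the two monic polynomials on the right have disjoint root sets $\{-j_{2\ell-1}\}$ and $\{-j_{2\ell}\}$ (the $j$'s are pairwise distinct). At the same time, both products are monic of degree $k$, so the leading terms cancel and $\deg P \le k-1 \le k$. Hence $r$ is a root of a nonzero integer polynomial of degree at most $k$, giving the desired degree bound. For the height, I would expand each product via elementary symmetric polynomials: the $\ell$-th coefficient of $\prod (x+j_i)$ is $e_\ell(j_1,\ldots,j_k)$, bounded in absolute value by $\binom{k}{\ell}(N-1)^\ell \le 2^k N^k$. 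Thus every coefficient of $P$ is at most $2^{k+1} N^k = O(N^k)$.

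Finally, since the lemma is phrased in terms of the primitive polynomial of $r$, I would pass from $P$ to its irreducible factor vanishing at $r$ by appealing to a standard inequality (Gelfond/Mignotte) bounding the height of any factor of $P$ by $2^{\deg P} H(P)$; this still gives a bound of the form $O(N^k)$ (with the implicit constant allowed to depend on $k$), and it can only decrease the degree. The main technical point to get right is the verification that $P \not\equiv 0$: it is essential to remember that the edges of the cycle really correspond to \emph{distinct} terms $r + j_\ell$ of $A$, so that the two root multisets are disjoint and the polynomial does not collapse identically. Everything else is bookkeeping on the coefficients of a pair of degree-$k$ products.
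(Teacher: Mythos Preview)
Your argument is correct and follows essentially the same route as the paper: translate the $2k$-cycle into the identity $\prod_{\text{odd}}(r+j_\ell)=\prod_{\text{even}}(r+j_\ell)$, observe the leading terms cancel so the difference has degree at most $k-1$, check nonvanishing via distinctness of the $j_\ell$, and bound the coefficients by elementary symmetric-function estimates. Your extra step of passing to the minimal polynomial via a Gelfond/Mignotte-type factor bound is in fact more careful than the paper, which simply exhibits the vanishing polynomial and stops; since the lemma is only ever applied with $k$ fixed, the $k$-dependence in the implied constant is harmless either way.
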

\begin{proof}
	Let $\langle b_0b_1\ldots b_{2k-1}\rangle$ be a cycle in $G$. Then we have
	$$
		\prod_{i \text{ is even}} b_{i}b_{i+1} = \prod_{i \text{ is odd}}  b_{i}b_{i+1},
	$$
	or after the substitution $b_ib_{i+1} = a_{j(i)} = r + j$ for some $j(i)$ we have a polynomial equation in $r$ with integer coefficients
	$$
		\prod_{i \text{ is even}} (r+j(i)) = \prod_{i \text{ is odd}} (r + j(i)).
	$$
	The leading term cancels out and we arrive at an integer polynomial of degree less than $k$. The polynomial is not identically zero since  the $j$s are distinct. Moreover, the coefficients of the polynomial are bounded by $O(N^k)$. Indeed, for $0 \leq l < k$ the coefficient in the term $r^l$ is a sum of $(k-l)$-fold products of $j$s, and each $j$ is in turn bounded by $N$. The number of summands is bounded by $2 \binom {k}{l}$, thus each coefficient is bounded by $O(N^k)$ as $2k \leq V(G) \leq N$. 
\end{proof}  

Let $K = \mathbb{Q}(r)$ be the field extension with $r$ adjoined. The following lemma, which is a straightforward generalization of \cite{ZH1} Lemma 7, allows us to assume that in fact all the elements we are dealing with lie in $K$.

\begin{lemma} \label{lemma:kreduction}
	Let $A \subset B.B$ and $A \subset K$ for some field $K$. Then there is $B'$ with $|B'| \leq 2|B|$ s.t. $B' \subset K$ and $A \subset B'B'$. 
\end{lemma}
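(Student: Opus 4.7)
The plan is to partition $B$ according to the equivalence relation $b \sim b'$ iff $b/b' \in K^*$, which splits $B \setminus \{0\}$ into classes $B_1, \ldots, B_m$ (if $0 \in B$, include it in $B'$ at a cost of one element). Fix a representative $\beta_i \in B_i$; each $b \in B_i$ then has the form $b = \lambda_b \beta_i$ with $\lambda_b \in K^*$. Call a pair $(i,j)$, possibly with $i = j$, \emph{good} if $\beta_i \beta_j \in K$; this is a property of the classes (independent of the choice of representatives) and is precisely the condition that every product $bb'$ with $b \in B_i$, $b' \in B_j$ lies in $K$.

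The central rigidity observation is that each class is in at most one good pair: from $\beta_i\beta_j, \beta_i\beta_k \in K$ one gets $\beta_j/\beta_k \in K$, whence $j = k$. So the good relation organizes the classes into self-paired singletons (those with $\beta_i^2 \in K$) together with disjoint matched pairs $\{B_i, B_j\}$, $i \neq j$. Any class not touched by a good pair may be discarded, since any factorization $a = bb'$ with $a \in A \subset K$ forces the pair of classes containing $b$ and $b'$ to be good.

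For each good pair $(i,j)$, including self-pairs, set
$$
B'_{ij} \;:=\; \bigl\{\, b/\beta_i \,:\, b \in B_i \,\bigr\} \;\cup\; \bigl\{\, \beta_i b' \,:\, b' \in B_j \,\bigr\}.
$$
Each element lies in $K$, since $b/\beta_i = \lambda_b \in K^*$ and $\beta_i b' = \lambda_{b'}(\beta_i \beta_j) \in K$. The cross products satisfy $(b/\beta_i)(\beta_i b') = bb'$, so $B_i \cdot B_j \subset B'_{ij} \cdot B'_{ij}$. Setting $B' := \bigcup B'_{ij}$ over the (disjoint) collection of good pairs, the trivial size bounds $|B'_{ij}| \leq |B_i| + |B_j|$ for $i \neq j$ and $|B'_{ii}| \leq 2|B_i|$ sum to $|B'| \leq 2|B|$. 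Every $a \in A$ then lies in some $B'_{ij}B'_{ij} \subset B'B'$, so $B'$ has all the desired properties.

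I do not foresee any real obstacle: the only slightly nontrivial step is the rigidity observation of the second paragraph, which is a one-line computation, and the rest is an explicit substitution. Degenerate cases (the zero element, possible overlaps between the two halves of $B'_{ij}$) only help the size bound.
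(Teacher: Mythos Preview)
Your argument is correct: the coset decomposition of $B$ modulo $K^*$, the rigidity observation that each class participates in at most one ``good'' pair, and the explicit rescaling $b \mapsto b/\beta_i$, $b' \mapsto \beta_i b'$ all go through exactly as you describe, and the size bookkeeping is clean.

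As for comparison, the paper does not actually prove Lemma~\ref{lemma:kreduction}: it merely states it as ``a straightforward generalization of \cite{ZH1} Lemma~7'' and moves on. So there is no in-paper proof to compare against. Your argument is the natural one and is presumably what the author has in mind; the only comment is that your write-up is already more detailed than what the paper (or, likely, \cite{ZH1}) provides.
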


We will need some estimates from number theory to estimate the number $\pi(P, x)$ of primes $p < x$ such that $P(\cdot) = 0$ has a solution modulo $p$, where $P \in \mathbb{Z}[x]$, that is, $P$ has a linear factor in $\mathbb{F}_p$. Here we record a lemma we will use, which is a consequence of a certain effective version of the Chebotarev Density Theorem. The original statement, proved by Lagarias and Odlyzko, is somewhat technical, so we defer it to the Appendix.

\begin{lemma} \label{lemma:algprimes}
  Assume $P$ is an irreducible integer polynomial of degree $d$ with coefficients bounded by $O(N^k)$. Then, assuming GRH,
  \beq \label{eq:boundprimes}
  	\pi(P, N) = \Omega_{d,k}\left(\frac{N}{\log N}\right).
  \eeq
\end{lemma}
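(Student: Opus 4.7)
The plan is to translate the statement into the language of the splitting field of $P$ and then apply the effective version of the Chebotarev density theorem. Let $L$ be the splitting field of $P$ over $\mathbb{Q}$ and set $G = \mathrm{Gal}(L/\mathbb{Q})$. Since $P$ is irreducible of degree $d$, $G$ acts transitively on the $d$ roots of $P$, so we may regard $G$ as a transitive subgroup of $S_d$ via this action.

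The first step is the standard fact that for every prime $p$ unramified in $L$ and coprime to the leading coefficient of $P$, the factorization type of $P$ modulo $p$ matches the cycle type of $\mathrm{Frob}_p$ acting on the roots. Hence $P$ has a linear factor modulo $p$ precisely when $\mathrm{Frob}_p$ has at least one fixed point on the roots. Let $T \subset G$ denote the union of conjugacy classes consisting of such elements. Because the stabilizer $H_1$ of a single root has index $d$ in $G$, we have $|T| \geq |H_1|$, and therefore
\[
  \frac{|T|}{|G|} \geq \frac{1}{d}.
\]

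The second step is to apply the Lagarias--Odlyzko effective Chebotarev bound (the version recorded in the Appendix) to $T$, which under GRH yields
\[
  \pi_T(N) \;=\; \frac{|T|}{|G|}\,\mathrm{Li}(N) \;+\; O\!\left(N^{1/2}\bigl(\log|d_L| + n_L \log N\bigr)\right),
\]
where $n_L = [L:\mathbb{Q}] \leq d!$ and $d_L$ is the discriminant of $L/\mathbb{Q}$. The ramified primes and the finitely many primes dividing the leading coefficient of $P$ contribute $O(\log|d_L|)$ and can be discarded, so up to an admissible error we have $\pi(P, N) \geq \pi_T(N)$.

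The final step is to absorb the error term. Since the coefficients of $P$ are $O(N^k)$, Cauchy's bound gives $|\alpha_i| = O(N^k)$ for every root of $P$, so $\mathrm{disc}(P) = N^{O_d(k)}$ and consequently the discriminant of each $\mathbb{Q}(\alpha_i)/\mathbb{Q}$ is $N^{O_d(k)}$. Passing to the compositum $L$ by iterating the tower formula for discriminants inflates the exponent by a factor $O_d(1)$, so in total $\log|d_L| = O_{d,k}(\log N)$. Together with $n_L \leq d!$ this makes the error $O_{d,k}(N^{1/2} \log N)$, which for $N$ large in terms of $d$ and $k$ is dominated by the main term $\mathrm{Li}(N)/d \gg_d N/\log N$. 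The main obstacle, as I see it, is not conceptual but bookkeeping: verifying the discriminant estimate with explicit control over how the constant depends on $d$ and $k$, so that it remains absorbed in the $O_{d,k}$ notation when plugged into the Chebotarev error.
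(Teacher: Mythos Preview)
Your argument is correct and follows essentially the same route as the paper: pass to the splitting field $L$ of $P$, invoke the Lagarias--Odlyzko effective Chebotarev theorem under GRH, and bound $\log|d_L|$ by $O_{d,k}(\log N)$ so that the error term is negligible. The only cosmetic difference is that the paper simply takes $C=\{\mathrm{id}\}$ (yielding density $1/|G|\ge 1/d!$), whereas you take the larger union $T$ of elements with a fixed point (density $\ge 1/d$); either choice suffices for the $\Omega_{d,k}(N/\log N)$ conclusion.
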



To close the argument, we now prove the size of $B$ is actually controlled by the degree and height of $r$, which would imply that either the containment graph does not contain small even cycles and thus sparse, or $B$ is not very large.

\begin{lemma} \label{lem:bounds}
	Suppose $A = \{r + [N]\} \subset B.B$. Let $k$ be fixed and $r$ is an algebraic number of degree $k$ and height $O(N^k)$. Then, assuming GRH,  $|B| = \Omega_{d,k}(N/\log N)$.
\end{lemma}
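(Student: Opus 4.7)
The plan is to reduce to an integer statement via the norm map $\mathrm{Nm} := N_{K/\mathbb{Q}}$, where $K := \mathbb{Q}(r)$, and then invoke Proposition \ref{prop:general}. First, by Lemma \ref{lemma:kreduction} I may assume $B \subset K$; after scaling $r$ and $B$ by the leading coefficient of the primitive polynomial $P$ of $r$ (absorbing the rescaling into $k$-dependent constants), I may further assume $r \in \mathcal{O}_K$ and $B \subset \mathcal{O}_K$, so that $r$ is an algebraic integer whose monic minimal polynomial $P_0$ has degree $k$ and height at most $O(N^{O(k)})$. Since $a = b_1 b_2$ in $K$ implies $\mathrm{Nm}(a) = \mathrm{Nm}(b_1) \mathrm{Nm}(b_2)$ in $\mathbb{Z}$, the integer set
\[
A' := \{\, \mathrm{Nm}(r+j) : j=0,\ldots,N-1 \,\} = \{\,(-1)^k P_0(-j) : j=0,\ldots,N-1\,\}
\]
is contained in the integer product set $\mathrm{Nm}(B) \cdot \mathrm{Nm}(B)$, with $|\mathrm{Nm}(B)| \le |B|$. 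Moreover, since a degree-$k$ polynomial is at most $k$-to-one, $|A'| \ge N/k$, and each element of $A'$ is bounded by $O(N^{O(k)})$.

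I then apply Proposition \ref{prop:general} to $A' \subset \mathrm{Nm}(B) \cdot \mathrm{Nm}(B)$ with $K_1 = O(k)$ and an appropriately chosen $K_2 = K_2(k)$. The only nontrivial hypothesis to verify is the lower bound on distinct large prime divisors of $\Pi := \prod_{j=0}^{N-1} P_0(-j)$: a prime $p > K_2 N$ divides $\Pi$ precisely when $P_0$ has a root in the window $\{-j \bmod p : j \in [0, N)\} \subset \mathbb{F}_p$, a subset of size $N$. To produce $\Omega_k(N/\log N)$ such primes, apply Lemma \ref{lemma:algprimes} at scale $X = N^{1+1/k}$: this yields $\Omega_k(X/\log X) = \Omega_k(N^{1+1/k}/\log N)$ primes $p \le X$ at which $P_0$ has a root mod $p$, essentially all of which lie above $K_2 N$. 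By effective equidistribution of the residues of the roots across $\mathbb{F}_p$ (a refinement of the GRH-conditional Chebotarev ingredient behind Lemma \ref{lemma:algprimes}), a proportion $\Theta_k(N/p)$ of these primes, averaging to $\Theta_k(N^{-1/k})$, have their root lying in the prescribed window, producing the required $\Omega_k(N/\log N)$ large prime divisors of $\Pi$.

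Plugging $M = \Omega_k(N/\log N)$ into Proposition \ref{prop:general} yields $|\mathrm{Nm}(B)| = \Omega_k(N/\log N)$, hence $|B| \ge |\mathrm{Nm}(B)| = \Omega_k(N/\log N)$, as claimed. The main technical hurdle is the equidistribution step: Lemma \ref{lemma:algprimes} only guarantees that $P_0$ has \emph{some} root modulo $p$, whereas I additionally need to pin the root down to a window of relative size $N/p$ inside $\mathbb{F}_p$. Establishing this should follow from the same GRH-conditional Lagarias--Odlyzko effective Chebotarev framework used to prove Lemma \ref{lemma:algprimes}, now applied to Frobenius classes in the splitting field of $P_0$ carrying residue information for a chosen root; bookkeeping the uniform dependence of constants on $k$ through this refinement and through the height blowup to $O(N^{O(k)})$ is a further point requiring care.
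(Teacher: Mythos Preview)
Your overall strategy---reduce to $K=\mathbb{Q}(r)$, take norms, and invoke Proposition~\ref{prop:general}---matches the paper's. But there is a genuine gap in the prime-counting step, and it comes from choosing the wrong scale in Lemma~\ref{lemma:algprimes}.

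You apply Lemma~\ref{lemma:algprimes} at $X=N^{1+1/k}$, which forces you to work with primes $p>N$. For such $p$ the interval $[0,N-1]$ no longer covers all residue classes modulo $p$, so you then need the root of $P_0$ modulo $p$ to land in a window of density $N/p$. You assert this follows from a ``refinement'' of the Lagarias--Odlyzko Chebotarev framework, but it does not: Chebotarev sees only the conjugacy class of Frobenius, i.e.\ the factorization \emph{type} of $P_0$ modulo $p$; it carries no information whatsoever about \emph{where} in $\mathbb{F}_p$ the roots sit. Equidistribution of roots of a fixed polynomial modulo varying primes is a separate and much deeper problem (Hooley, Duke--Friedlander--Iwaniec, Toth), and is neither a consequence of Chebotarev nor needed here.

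The fix is exactly what the paper does: apply Lemma~\ref{lemma:algprimes} at scale $N$ rather than $N^{1+1/k}$. This already yields $\Omega_k(N/\log N)$ primes $p<N$ at which $P_0$ has a root modulo $p$; since $p<N$, the interval $[0,N-1]$ contains a representative of every residue class, so some $j\in[0,N-1]$ with $P_0(-j)\equiv 0\pmod{p}$ exists automatically---no equidistribution needed. Restricting to the upper half of these primes gives primes $p>C_k N/4$, and one applies Proposition~\ref{prop:general} with $K_2=C_k/4<1$.

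A smaller slip: scaling by the leading coefficient of the primitive polynomial of $r$ makes $r$ an algebraic integer, but it does \emph{not} make the elements of $B$ integral. The paper treats the two issues separately: first clear the denominators of $B$ by their LCM $M$ (so $M^2A\subset (MB).(MB)$), and then clear the denominator of $r$ by a further integer $m=O(N^k)$, enlarging to $B_2=mB'\cup B'$.
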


\begin{proof}

 Let $K = \mathbb{Q}(r)/\mathbb{Q}$, so $k = [\mathbb{Q}(r) : \mathbb{Q}]$. By Lemma \ref{lemma:kreduction}, there is a set $B'$ of algebraic numbers in $K$ of degree at most $k$ such that $B'.B'$ contains $\{ r + [N] \}$ and  $|B| \gg |B'|$. 
  
First, we want to make all elements of $B'$ algebraic integers. Let $M$ be the LCM of the denominators of $b$s in $B'$ (by the denominator of an algebraic number $\alpha$ we mean the minimal positive integer $D$ s.t. $D\alpha$ is integer in $\mathbb{Q}(r)$). Multiplying  every element by $M$ we get a new set $B'$ of algebraic integers (we use the same letter to shorten the notation) such that $B'.B'$ contains 
$$
A = \{ M^2(r + [N]) \}.
$$ 

Next, since $r$ is of height $O(N^k)$, the denominator of $r$ is bounded by $O(N^k)$, so there is a rational integer $m = O(N^k)$ s.t. 
  $mr$ is an integer in $\mathbb{Q}(r)$ of height $O(N^{k^2})$.  Denote $r' = rm$ and take $B_2 = mB' \cup B'$, so 
  $$
  A_2 = \{ M^2(r' + m[N]) \} \subset B_2.B_2. 
  $$  
  Now we have constructed a new set $B_2$ of  algebraic integers such that $B_2.B_2$ contains an AP 
  $$
  A_2 = \{ M^2(r' + m[N]) \}
  $$ with $m = O(N^k)$ and $r'$ is an algebraic integer of height $O(N^{k^2})$.

We claim that the norm $N_{\mathbb{Q}(r)/\mathbb{Q}}(r' + mi)$ is an integer polynomial in $i$ of degree $k$ and coefficients bounded by $O(N^{k^2})$. Indeed, let $\sigma_j, j = 1 \ldots k$ be the distinct $\mathbb{C}$-embeddings of $\mathbb{Q}(r)$. We have
$$
N(r' + mi) = m^k N(r + i) = m^k \prod_{j=1}^k (\sigma_j(r) + i) = (-1)^k P(-i) m^k,
$$
where $P$ is the minimal polynomial of $r$. On the other hand, by our choice of $m$,  $m^k P(r)$ is the minimal polynomial of $mr$ (since it is an integer monic polynomial with respect to $mr$ of degree $k$) which in turn has integer coefficients bounded by $O(N^{k^2})$.
  	
The final step is to take norms. Let 
$$
B_3 = \{N(b)\,|\, b \in B_2\}
$$ and 
$$
A_3 = \{N(a)\,|\, a \in A_2 \}.
$$ Now $B_3$ and $A_3$ consist of rational integers and
$$
 A_3 = \{ M^{2d} f([N]) \} \subset B_3.B_3,
$$ where  $f$ is a polynomial of degree $k$ and height $O(N^{k^2})$. We then apply Lemma \ref{lemma:algprimes} to deduce that there are at least $C_{k} N/\log N$ distinct primes $p < N$ such that $f$ has a root modulo $p$.  In other words, if $p < N$ is such a prime then there is an index $i$  s.t. $f(i) \equiv 0 \text{ mod } p$. By the Prime Number Theorem, at least half of such primes are greater than $C_{k} N/4$ for large $N$, so 
\beq \label{eq:omegaf}
\omega \left( \prod_{i=0}^{N-1}f(i), \frac{C_{k} }{4}N \right) \geq \frac{C_k N}{2 \log N}.
\eeq

On the other hand, $f(i) = O_k(N^{k^2+1})$ since the coefficients are bounded by $O(N^{k^2})$ and the degree of $f$ is $k$. In particular, certainly $\gc(f[N]) = O_k(N^{k^2+1})$ which together with (\ref{eq:omegaf}) gives
$$
\omega \left( \prod_{a \in A_3}\frac{a}{\gc(A_3)},  \frac{C_k }{4}N \right) = \omega \left( \prod_{i=0}^{N-1}\frac{f(i)}{\gc(f[N])}, \frac{C_{k} }{4}N \right) = \Omega_k\left(\frac{N}{ \log N}\right).
$$

Now everything is ready to apply Proposition \ref{prop:general} to the containment $A_3 \subset B_3.B_3$ and conclude that $|B_3| = \Omega_{k}(N/\log N)$. Since $|B_3| \ll |B|$, the claim follows.

\end{proof}

It remains to apply a well known result which bounds the number of edges in a $2k$-cycles free graph.

\begin{proof} \textbf{[of Theorem \ref{thm:complex}]}

Fix $\eps > 0$ and let $k = \lfloor \frac{1}{\eps} \rfloor$. If $G(A, B.B)$ does not contain $2k$-cycles, then by the bound of Bondy and Simonovits \cite{BS},
$$
	|A| = E(G) = O_k(|B|^{1 + 1/k}),
$$ 
so we are done. 

Otherwise, by Lemma \ref{lemma:cycles}, $r$ is an algebraic number of degree at most $k$ and height at most $O(N^k)$. Then Lemmas \ref{lemma:cycles} and \ref{lem:bounds} imply $|A| = O_k(|B|^{1+o(1)})$.

\end{proof}

\section{Acknowledgements}
I am grateful to the math community supporting MathOverflow and in particular to the users Joel, Lucia and Aurel for helpful comments and explanations. I also thank Yuri Bilu and Peter Hegarty for attention to this work. Last, but not least, I would like to thank an anonymous referee for comments and suggestions.

\section*{Appendix}

Let $A$ be a Dedekind domain, $K$ its quotient field, $L$ a finite extension of $K$ with $\mathrm{Gal}(L/K) = G$. Let also $\mathfrak{p}$ be a prime ideal in $A$ that does not ramify in $L$ and $\mathfrak{B}$ be an ideal of $B = \mathcal{O}_L$ lying over $\mathfrak{p}$. Recall that if $L/K$ is abelian, the Frobenius element (see e.g. \cite{N}) is a unique element $\sigma_\mathfrak{p} \in G$ such that for all $\alpha \in \mathcal{O}_L$ holds
$$
	\sigma_\mathfrak{p}(\alpha) = \alpha^{\#\mathcal{O}_K/\mathfrak{p}} \,\,\,\, ( \text{mod } \mathfrak{p}\mathcal{O}_L).
$$

For a non-Abelian extension $L/K$, the Frobenius symbol depends on the choice $\mathfrak{B}$ lying over $\mathfrak{p}$, and we have for all $\alpha \in \cO_L$
$$
	\sigma_{\gB}(\alpha) = \alpha^{\#\mathcal{O}_K/\mathfrak{p}} \,\,\,\, ( \text{mod } \gB).
$$

In this case all such elements $\sigma_{\gB}$ are conjugated, and the Frobenius symbol $[\frac{L/K}{\gp}]$ is defined as the corresponding conjugacy class of $G$. 

The crucial property of the Frobenius symbol is that it determines the \emph{decomposition type} of $\gp$ in $\cO_L$. In particular, if $L$ is the splitting field of a polynomial $f$ and $\gp$ does not ramify (i.e. does not divide the discriminant of $f$), the cycle structure of $\sigma_\gp$, viewed as a permutation of the roots of $f$, corresponds to the degrees of the irreducible factors of $f$ modulo $\gp$ in $(\cO_K/\gp)[X]$. In particular, $f$ has a root mod $\gp$ if and only if $\sigma_\gp$ fixes at least one root, which corresponds to a linear factor. Thus, to estimate the number of such primes, it suffices to estimate the number of prime ideals $\gp$ such that the corresponding conjugacy class contains at least one 1-cycle. For simplicity, we can take the conjugacy class containing only the identical permutation, i.e. take only prime ideals such that $f$ factors completely into linear factors mod $\gp$. The desired estimate is then given by the Chebotarev Density Theorem, and for our purposes we need the quantitative effective version proved by Lagarias and Odlyzsko under GRH.

\begin{thm}[Lagarias and Odlyzko, \cite{LO}]
Let $L/K$ be a Galois extension of number fields, $d_L$ the absolute discriminant of $L$, $n_L$ the degree of $L$ over $\mathbb{Q}$. Let $G$ be the Galois group of this extension and $C$ is a conjugacy class of $G$. For all $x>1$, let $\pi_C(x)$ denote the number of prime ideals $\mathfrak{p}$ of $K$ of norm less than or equal to $x$, that do not ramify in $L$ and such that 
$\left[ \frac{L/K}{\mathfrak{p}} \right] = C$. Then, assuming GRH,
$$
	 | \pi_C(x) - \frac{|C|}{|G|}\mathrm{Li}(x) | \leq c\left(\frac{|C|}{|G|}\sqrt{x} \log (d_Lx^{|G|}) + |C| \log(d_L)\right),
$$
where $c$ is an absolute constant.
\end{thm}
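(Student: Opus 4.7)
The plan is to follow the analytic proof of Lagarias and Odlyzko: translate $\pi_C(x)$ into character sums over primes, express those through Artin L-functions, apply an explicit formula, and use GRH to concentrate all non-trivial zeros on $\mathrm{Re}(s)=1/2$. The first move is the standard character decomposition $1_C(g) = (|C|/|G|)\sum_\chi \overline{\chi(C)}\chi(g)$ summed over irreducible characters of $G$. Setting $g=\sigma_\mathfrak{p}$ and summing over unramified $\mathfrak{p}$ with $N\mathfrak{p}\leq x$ writes $\pi_C(x)$ as a linear combination of character sums $\pi(x,\chi)$. As is customary, I would first work with the von Mangoldt-weighted analogue $\psi_C(x) = (|C|/|G|)\sum_\chi \overline{\chi(C)}\,\psi(x,\chi)$, prove the bound there, and recover $\pi_C(x)$ by partial summation at the end (this loses only a logarithmic factor absorbed in the stated error). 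For each $\chi$ the relevant analytic object is the Artin L-function $L(s,\chi,L/K)$ whose logarithmic derivative generates $\psi(x,\chi)$ by a Perron/Mellin integral.

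The second step is to execute the explicit formula. Shift the contour of $-\frac{1}{2\pi i}\int L'/L(s,\chi)\,x^s/s\,ds$ from $\mathrm{Re}(s)=2$ leftward past $s=1$, truncating the vertical integral at height $T$ to be optimized. The main term arises only from the trivial character via the simple pole of $\zeta_K(s)$ at $s=1$, contributing $(|C|/|G|)\mathrm{Li}(x)$. The remainder is a sum over non-trivial zeros $\rho$ of each $L(s,\chi)$ of terms $x^\rho/\rho$ plus truncation errors. Under GRH every such $\rho$ lies on $\mathrm{Re}(s)=1/2$, so the sum is dominated by $\sqrt{x}$ times the number of zeros with $|\mathrm{Im}(\rho)|\leq T$. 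A Riemann-von Mangoldt-type estimate gives $N(\chi,T) \ll T\log(A(\chi)T^{\chi(1)n_K}) + \log A(\chi)$, where $A(\chi)$ is the Artin conductor, and the conductor-discriminant formula $\prod_\chi A(\chi)^{\chi(1)} = d_L$ lets these be repackaged across $\chi$. Summing $\overline{\chi(C)}$-weighted contributions and optimizing $T\approx \sqrt{x}$ to balance truncation against the zero sum produces precisely the principal error term $(|C|/|G|)\sqrt{x}\log(d_L x^{|G|})$.

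The main obstacle is that for a non-abelian irreducible $\chi$ the L-function $L(s,\chi,L/K)$ is not known to be entire; only meromorphy and the functional equation are available, and the explicit formula requires enough control of vertical growth to justify the contour manipulations. I would address this by Brauer induction: write $\chi = \sum_i m_i\,\mathrm{Ind}_{H_i}^G\chi_i$ with $\chi_i$ one-dimensional characters of subgroups $H_i$, so that $L(s,\chi,L/K) = \prod_i L(s,\chi_i,L/L^{H_i})^{m_i}$ becomes a signed product of Hecke L-functions. Hecke L-functions have the full package of analytic properties (entire away from the trivial character, functional equation, zero counts in terms of analytic conductor), and GRH for them is precisely what we assume. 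The cost of this induction is that errors pick up factors of $\sum_i|m_i|[G:H_i]$; after careful bookkeeping these, together with the ramified-prime contribution and the trivially bounded $m\geq 2$ terms of $\psi_C$ (which are $O(\sqrt{x})$), produce the auxiliary term $|C|\log d_L$ in the stated bound.

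The final step is straightforward but needs care: convert the $\psi_C(x)$ estimate into the claimed $\pi_C(x)$ estimate by partial summation, verifying that the remainders from prime powers and from the $\mathrm{Li}(x)$ versus $x/\log x$ passage fit inside the same error shape. I expect the hardest and most delicate part to be the zero-density accounting across the Brauer decomposition, keeping the constants absolute and ensuring the conductor-discriminant identity cleanly folds the per-character bounds into a single expression depending only on $d_L$ and $|G|$; this is where the particular form $\log(d_L x^{|G|})$ rather than anything worse comes from, and it is precisely the technical heart of the Lagarias-Odlyzko argument.
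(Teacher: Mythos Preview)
The paper does not prove this theorem at all: it is quoted in the Appendix as a known result of Lagarias and Odlyzko \cite{LO} and used as a black box to deduce Lemma~\ref{lemma:algprimes}. So there is no ``paper's own proof'' to compare against.

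That said, your sketch is a faithful outline of the actual Lagarias--Odlyzko argument: orthogonality of characters to decompose $\pi_C$, passage to $\psi_C$, the explicit formula for each Artin $L$-function, GRH to put all zeros on the critical line, Riemann--von Mangoldt zero counts, and the conductor--discriminant formula to repackage everything in terms of $d_L$ and $|G|$. Your handling of the non-abelian case via Brauer induction into Hecke $L$-functions is also the standard workaround, and you correctly identify the bookkeeping of the induction coefficients as the source of the secondary $|C|\log d_L$ term. One minor point: in the original paper the reduction to abelian $L$-functions is done via the factorization $\zeta_L(s)=\prod_\chi L(s,\chi,L/K)^{\chi(1)}$ and Deuring's reduction rather than full Brauer induction, but the effect is the same. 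As a proof plan this is correct; turning it into an actual proof with absolute constants is of course a substantial exercise, which is why the paper simply cites \cite{LO}.
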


To deduce Lemma \ref{lemma:algprimes}, it remains to take $L$ the splitting field of our polynomial $P$, $ C = \{ \mathrm{id} \}$ (which is in fact is more restrictive) and note that the discriminant is bounded by $N^{O_{d, k}(1)}$, while the order of the Galois group is bounded by $d! = O_d(1)$.

\end{document}